\documentclass[a4paper,12pt]{article}
\usepackage{amsmath}
\usepackage{amsthm}
\usepackage{mathrsfs}
\usepackage{amssymb}
\usepackage{amsfonts}
\usepackage{amssymb}
\usepackage{float} 
\usepackage{booktabs}
\usepackage{authblk}
\usepackage{amsthm}
\usepackage{comment}
\newtheorem{theorem}{Theorem}
\usepackage{amsmath}
\usepackage{amsthm}
\usepackage{mathrsfs}
\usepackage{amssymb}
\usepackage{hyperref}
\usepackage{multirow}
\usepackage{multicol}
\usepackage{curves}
\usepackage[utf8]{inputenc}
\usepackage[T1]{fontenc}
\usepackage{amsmath}
\usepackage{amsfonts}
\usepackage{amssymb}
\usepackage{subcaption}
\usepackage{caption}
\usepackage{graphicx}
\usepackage[table]{xcolor}
\usepackage{tkz-graph}
\usepackage{pgfplots}
\usepackage{forest}
\usepackage{chemfig}
\usepackage{float}
\definecolor{skyblue}{RGB}{70, 130, 180}

\newtheorem{example}{Example}[section] 
\newtheorem{lemma}{Lemma}[section] 

\usepackage[nottoc,other]{tocbibind}
\usetikzlibrary{positioning}

\tikzset{
    main node/.style={circle,fill=white,draw,minimum size=0.5cm,inner sep=0pt},
}

\usepackage[a4paper, margin=2.2 cm]{geometry} 

\usepackage{titlesec}
\titleformat{\section}[hang]{\normalfont\Large\bfseries}{\thesection.}{1em}{}
\title{\textbf{Modified Cubic B-spline Based Differential Quadrature Methods for Time-fractional Black-Scholes Equation}}

\author[1]{Nizamudheen V}
\author[2]{Riyasudheen TK}
\author[3]{ Noufal Asharaf}
\author[4]{ Shefeeq T}

\affil[1]{\small Department of Mathematics, Farook College (Autonomous) affiliated to  University of Calicut, Kozhikode,  673632,  India, \texttt{\small nizam@farookcollege.ac.in}}
\affil[2]{\small Department of Computational Science and Humanities, Indian Institute of Information Technology Kottayam,
Valavoor, Kottayam 686635, Kerala, India, \texttt{\small riyasudheen@iiitkottayam.ac.in}} 
\affil[3]{\small Department of Mathematics, CUSAT (Cochin University of Science and Technology), Cochin,  682022, India, \texttt{\small noufal@cusat.ac.in}}
\affil[4]{\small Department of Mathematics, Farook College (Autonomous) affiliated to University of  Calicut,  Kozhikode,  673632,  India, \texttt{\small shefeeq@farookcollege.ac.in}}

\date{  } 
\begin{document}
\maketitle
\noindent Corresponding Author: Shefeeq T, shefeeq@farookcollege.ac.in 
\begin{abstract}
The time-fractional Black–Scholes equation (TFBSE) is intended to price the options for which the underlying price fluctuates within a correlated fractal transmission system. Although the TFBSE is an influential approach for grasping the long-term memory traits of financial markets, the non-local nature of fractional derivatives makes significant challenges in finding an accurate solution. We perform an efficient use of the differential quadrature method (DQM) based on modified cubic B-splines to solve the TFBSE governing European options. This paper constructs an algorithm by the combination of time fractional discretization using the finite difference method $L1$ and space discretization using the modified cubic B-spline-based differential quadrature method. Uniform meshes are considered for the discretization of both temporal and spatial domains. Theoretical stability has been established by finding an estimate for the maximum norm of the inverse operator regardless of the involvement of mesh parameters. We trigger the Neumann series theorem to obtain a uniform bound for the inverse operator under reasonable conditions on the mesh parameters. The numerical illustrations show that this implicit numerical method exhibits a fourth-order convergence in the space direction and the order $2-\alpha$ in time. Moreover, we observe an enhancement in order of spatial convergence whenever $\alpha$ tends to $0$. The results obtained are then compared with existing popular techniques to demonstrate the accuracy of modified cubic B-spline-based DQM.
\end{abstract}
\noindent \textbf{Keywords:} Black-Scholes equation,  fractional calculus, time fractional Black-Scholes model, differential quadrature method, modified cubic B-splines, Nuemann series theorem.

\medskip
\noindent \textbf{Mathematics Subject Classification (MSC 2020):} Primary: 65M12, 26A33, 91G60; Secondary: 35R11, 91G20.

\section{Introduction}

The ever-growing derivative markets have long been an area of research interest among financial
mathematicians, policymakers, and agents. It directly yields more notable contributions to the economic system as a whole.
Historically, financial derivatives and their specialized forms originated in ancient Greek philosophy. Options are financial derivatives that are configured as contracts between two parties to make a potential transaction of an asset at a preset `strike price' prior to `expiration'. In the current financial market scenario, option trading has enriched the derivative market, specifically in virtual fashion. Quite futuristic hedging strategies are required to exercise the options
effectively. Option valuation theories estimate the value of options by assigning a price, referred to as the ‘premium’. Finding a fair value for options had been defiance for the financial world until the invention of the Black-Scholes formula  \cite{Black}. Myron Scholes and Robert C Merton \cite{RC} have, in combined work with the late Fischer Black, deduced an instigating formula later to be known as the Black-Scholes-Merton formula for pricing stock options.

In 1997, they were awarded the Nobel Prize in economics sciences for developing this conceptual framework called the Black–Scholes model (BSM). In \cite{Black}, Black and Scholes demonstrated that a second-order parabolic partial differential equation with respect to time and stock price, known as the Black–Scholes equation  (BSE), governs the value of a European option on a stock whose price follows a geometric Brownian motion with constant drift and volatility. Various numerical techniques such as Monte-Carlo simulations \cite{monte}, lattice methods \cite{bin}, finite difference methods (FDM) \cite{kadal2,riyas,aswin}, finite element methods (FEM) \cite{FEM}, finite volume
methods (FVM) \cite{valkov, wang} are exerted on the classical Black-Scholes equations and its various generalizations, due to the inadequacy of analytical methodologies. However, it is well known that the hypotheses of the classical BS equation are so optimistic that it is not consistent with the real stock movement; it fails to catch the significant movements or jumps over short time steps in the financial market.

A key drawback of the classical Black-Scholes model and its alterations explored in the literature involves integer-order derivatives. Integer-order derivatives can only address localized information around a point \cite{panas}, whereas fractional derivatives and integrals serve as a tool for the description of memory \cite{Liu}; the non-local property indicates that a complex system's state depends on all its previous states in addition to its current state. This advantage has led to the increasing popularity of fractional calculus. The fast growth of fractional calculus in recent decades has led to the use of fractional partial differential equations (FPDE) across diverse domains, including physics, fluid mechanics, biology, engineering, and finance. The fractional BS equations were established to extend the financial theory in the late 20th century. In contrast to the classical BS model, its advantage is to simulate complex scenarios in the actual market. Fractional Brownian motion replaced geometric Brownian motion as a more suitable tool for capturing the aforementioned asset price behaviors. The fractional derivative operators and Hurst parameters \cite{Kiryakova} were conveyed into the model to fix the effect of memory in a financial system.  The significant contributions by Wyss \cite{Wyss} and Cartea et al. \cite{Cartea} increased the popularity of
fractional BS models. Rather than these models, numerous fractional Black-Scholes models have been introduced \cite{Heston, Chinwen, Wentingchen}. These models are dedicated to longer memory effects in asset price returns.

With the broad application of fractional BS equations in option pricing, there has been an increase in interest in exploring solution techniques. Since the beginning of this century, numerous research studies have been undertaken to find solutions from both analytical and numerical perspectives. Chen et al. \cite{Chen} derived an explicit closed-form analytical solution to price double barrier options by using the eigenfunction expansion method together with the Laplace transform. Jicheng Yu \cite{ji} implemented the Lie symmetry analysis in TFBSE using the invariant subspace method. A successful application of the homotopy analysis method (HAM) in TFBSE can be seen in the work of SE Fadugba \cite{SE}. Asma et al. \cite{asma} compared two methods in fractional models, one is a combination of the homotopy perturbation method (HPM), the Sumudu transform, and the He polynomials, and another is the homotopy Laplace transform perturbation method. However, in several cases, it is challenging to determine the explicit analytic solution of fractional BS equations. Therefore, various numerical methods have been developed in recent years. The methods are based on finite differences, finite elements, and the spectral approach. An RBF-based mesh-free method is developed by A. Golbabai et al. in combination with a finite difference method of order $\mathcal O(t^{2-\alpha})$ along the time. Nuugulu et al. \cite{nug} proposed a first-order implicit finite difference method for solving the constructed TFBSE. Zhang \cite{Zhang} employed another implicit finite difference method by discretizing the spatial derivative by central finite difference and L1 scheme for time derivative, having spatial order of convergence  2. In \cite{Rezaei} obtained a numerical solution based on the implicit difference scheme to the European option price with transaction costs. The work in  \cite{Krzyzanowski}  introduces a weighted FDM  for the subdiffusive Black-Scholes (B-S) model. The mixed alternate segment Crank-Nicolson (MASC-N)  scheme is applied by X. Yang\cite{Yang}. This scheme has a spatial order of convergence of 2. T Akram et al. \cite{tay} generalized the concept of the B-spline collocation method using extended cubic B-splines (ECBS). Zhaowei et al. \cite{Zhao} presented a compact quadratic spline collocation method for the fractional option pricing models. Rather than these methods, TFBSEs are numerically solved by other various methods, including, not limited to,  the meshless methods \cite{Phaochoo}, the spline interpolation method \cite{Ghafouri, Roul}, and numerical techniques with exponential convergence \cite{Hezhang}.  

In this work we consider the time fractional BS model \cite{Li, Chen} as follows:

\begin{equation}\label{eq1.1}
    \frac{\partial^\alpha Q(S,\tau)}{\partial \tau^\alpha} +\frac{1}{2} \sigma^2 S^2 \frac{\partial^2 Q(S,\tau)}{\partial S^2} + rS \frac{\partial Q(S,\tau)}{\partial S} - rQ(S,\tau)=0, \quad (S,\tau)=0 \in (0,\infty) \times (0,T), \tag{1.1}
\end{equation}
with initial and boundary conditions:
\[
Q(0,\tau) = f(\tau),  \quad Q(\infty,\tau) = g(\tau), \quad Q(S,T) = h(S),
\]
where \( 0 < \alpha \leq 1 \), \( \sigma \geq 0 \) is the volatility of the returns from the holding stock price \( S \), \( r \) is the risk-free rate and   \( T \) is the expiry time. The fractional derivative operator in  (\ref{eq1.1}) is a modified right Riemann–Liouville derivative which is defined as:

\begin{equation}\label{eq1.2}
    \frac{\partial^\alpha Q(S,\tau)}{\partial \tau^\alpha} = \frac{1}{\Gamma(1-\alpha)} \frac{d}{d\tau} \int_{\tau}^T\frac{Q(S,\xi) - Q(S,T)}{(\xi-\tau)^\alpha} d\xi, \quad 0 < \alpha < 1. \tag{1.2}
\end{equation}
For \( \alpha = 1 \), we get the classical BS model. Let \( \eta = T - \tau \), then for \( 0 < \alpha < 1 \), rewriting (\ref{eq1.2}), we have 
\begin{eqnarray*}
   \frac{\partial^\alpha Q(S,\tau)}{\partial \tau^\alpha} & = & \frac{1}{\Gamma(1-\alpha)} \frac{-d}{d\eta} \int_{T-\eta}^T \frac{Q(S,\xi) - Q(S,T)}{(\xi - (T-\eta))^\alpha} d\xi  \\
  & = &\frac{-1}{\Gamma(1-\alpha)} \frac{d}{d\eta} \int_0^\eta \frac{Q(S,T-\zeta) - Q(S,T)}{(\eta - \zeta)^\alpha} d\zeta.
\end{eqnarray*}
Moreover, defining $$ s = \ln S~\text{and}~u(s,\eta) = Q(e^s, T-\eta) ,$$ the model (\ref{eq1.1}) can be express as 
\begin{equation}\label{eq1.3}
    _0D^\alpha_{\eta} u(s,\eta) = \frac{1}{2} \sigma^2 \frac{\partial^2 u(s,\eta)}{\partial s^2} + \left( r - \frac{1}{2} \sigma^2 \right) \frac{\partial u(s,\eta)}{\partial s} - ru(s,\eta), \tag{1.3}
\end{equation}
with boundary conditions:
\[
u(-\infty, \eta) = f(\eta), \quad u(\infty, \eta) = g(\eta), \quad u(s,0) = h(s),
\]
where the fractional derivative is:
\begin{equation}\label{eq1.4}
    _0D^\alpha_{\eta} u(s,\eta) = \frac{1}{\Gamma(1-\alpha)} \frac{d}{d\eta} \int_0^\eta \frac{u(s,\eta) - u(s,0)}{(\eta - \zeta)^\alpha} d\zeta, \quad (0 < \alpha < 1). \tag{1.4}
\end{equation}
In order to solve the above model numerically it is necessary to truncate the original unbounded domain into a finite interval. For this, we restrict the range of variable s in problem (\ref{eq1.1}) to a finite interval($B_x, B_y$). Then the model  takes the following form:
\begin{equation}\label{eq1.5}
    _0D^\alpha_{\eta} u(s,\eta) = a \frac{\partial^2 u(s,\eta)}{\partial s^2} + b \frac{\partial u(s,\eta)}{\partial s} - cu(s,\eta)+f(s, \eta), \tag{1.5}
\end{equation}
with boundary conditions:
\[
u(B_x, \eta) = f(\eta), \quad u(B_y, \eta) = g(\eta), \quad u(s,0) = h(s),
\]
where 
\[ a =\frac{1}{2} \sigma^2 >0,\quad b=r-a, \quad c=r>0.  \]
Here, the source term $f(s,\eta)$ is added for numerical validation. The given model is an advection-diffusion-reaction model, and it is well known that when $a>0,\mbox{ } b<0,\mbox{ } c=0$ the model is a time fractional advection-diffusion model and $a>0,\mbox{ } b=0,\mbox{ } c\neq 0$ reaction-diffusion model. In this context of TFBSE, we employ the DQM method based on modified cubic B-splines in conjunction with the very convenient time-fractional discretization method L1 \cite{lin}. In essence, DQM, introduced by Bellman et al. \cite{Bellman}, approximates derivatives by expressing them as a weighted sum of function values at selected discrete node points within the problem domain. In DQM, various types of polynomials are used to obtain weights, including B-spline functions \cite{Dag, Karakoc}, cubic and modified B-spline functions\cite{Karakoc, Mital1, AliBashan, Bashan2, A.Babu}, quintic B-spline functions \cite{Bashan2}, Lagrange interpolation polynomials, Legendre polynomials, Hermite polynomials, and Sinc functions, among others. Ali Bashan et al. \cite{Bashan} used combination of Crank-Nicolson scheme and quintic B-spline based DQM to find the numerical solution of coupled KdV equation. In \cite{Bashan2}, DQM based on modified cubic B-spline was implemented to obtain the numerical solutions for the nonlinear Schrödinger (NLS) equation.  Jiwari et al. \cite{Jiwari1} used the combination of Lagrange interpolation method and a DQM based on modified set of cubic B-splines to find the numerical approximation of hyperbolic partial differential equations. In the works, a modified set of cubic B-splines are defined as a basis for the required polynomial space in a uniform grid. But this set of modified splines fails to preserve the optimal polynomial reproduction property near the boundary.  A. Babu et al. \cite{A.Babu} proposed a new modification of the standard cubic splines that retains the optimal accuracy near the boundary. The convenience and efficiency of these modified B-spline basis functions have led to the adoption of this approach to obtain the numerical solution of the TFBSE under study.

The rest of the content is organized as follows. Section 2 explains the discretization technique and develops an implicit numerical scheme. The theoretical stability is established in Section 3. In Section 4, we evaluate the proposed method using numerical examples. Section 5 concludes the article with highlights.

\section{Numerical Scheme}
This section explains the numerical scheme employed, beginning with the discretization process in the time and space derivatives. After the discretization process, we express the obtained implicit discretization scheme in its matrix form. Subsections follow that explain the calculation of weights associated with the DQM.
\subsection{Discretization}
Initially, we discretize the time-fractional derivative. For this partition the   domain $[0,T]$ uniformly into $N$ sub intervals, $0<\eta_0<\eta_1<\ldots , < \eta_N =T$, where $\Delta \eta=k=\eta_n - \eta_{n-1}=\dfrac{T}{N} \mbox{ for } n=1,2,3,\ldots , N.$  
Suppose $u(s,\eta) \in C^{(1)}$, for $0 < \alpha \leq 1$, the modified Riemann--Liouville derivative
\begin{eqnarray*}
    _{0} D_\eta^\alpha u(s, \eta) & = & \frac{1}{\Gamma(1-\alpha)} \frac{d}{d\eta} \int_0^\eta \frac{u(s,\zeta) - u(s,0)}{(\eta - \zeta)^\alpha} d\zeta \\
     & = &  \frac{1}{\Gamma(1-\alpha)} \frac{d}{d\eta} \left( \int_0^\eta \frac{u(s,\zeta)}{(\eta - \zeta)^\alpha} d\zeta - \int_0^\eta \frac{u(s,0)}{(\eta - \zeta)^\alpha} d\zeta \right)\\
     &= &  \frac{1}{\Gamma(1-\alpha)} \frac{d}{d\eta} \int_0^\eta \frac{u(s,\zeta)}{(\eta - \zeta)^\alpha} d\zeta - \frac{u(s,0)}{\Gamma(1-\alpha)} \frac{d}{d\eta} \int_0^\eta \frac{1}{(\eta - \zeta)^\alpha} d\zeta \\
     & = &  \frac{1}{\Gamma(1-\alpha)} \int_0^\eta \frac{\partial u(s,\zeta)}{\partial \zeta} (\eta - \zeta)^{-\alpha} d\zeta \\
     & = & {}_0^C D_\eta^\alpha u(s,\eta).
\end{eqnarray*}
Here the operator ${}_0^C D_\eta^\alpha u(s,\eta)$ is the Caputo derivative. Then by using the L1 discretization formula for Caputo fractional derivative the operator  $_{0} D_\eta^\alpha u(s, \eta)$   at point $(s_i, \eta_{k+1})$  can be approximated as:
\begin{align*}
_{0} D_\eta^\alpha u(s_i, \eta_{k+1}) &= \frac{1}{\Gamma(1-\alpha)} \int_0^{\eta_{k+1}} \frac{\partial u\left(s_i, \zeta\right)}{\partial \zeta}\left(\eta_{k+1}-\zeta\right)^{-\alpha} d \zeta \\
&= \frac{1}{\Gamma(1-\alpha)} \sum_{p=0}^k \int_{(p-1) k}^{p k}\left(\frac{u_i^p-u_i^{p-1}}{k}+\mathcal{O}(k)\right)((n+1) k-\zeta)^{-\alpha} d \zeta \\
&= \frac{1}{\Gamma(1-\alpha)} \sum_{p=0}^k \left(\frac{u_i^p-u_i^{p-1}}{k}+\mathcal{O}(k)\right) \cdot\\
& \hspace{1in}\left( \dfrac{((n+1)k-pk)^{1-\alpha}-((n+1)k-(p+1)k)^{1-\alpha}}{1-\alpha}\right) \\
_{0} D_\eta^\alpha u(s_i, \eta_{k+1}) &= \frac{1}{\Gamma(2-\alpha)} \sum_{p=0}^k \left(\frac{u_i^p-u_i^{p-1}}{k}+\mathcal{O}(k)\right) \cdot \\
& \hspace{1in} \left(((n+1)k-pk)^{1-\alpha}-((n+1)k-(p+1)k)^{1-\alpha}\right),
\end{align*}
on further simplification, we get the final L1 discretized formula as follows 
\begin{equation}\label{eq2.1}
     _{0} D_\eta^\alpha u(s_i, \eta_{k+1}) = \dfrac{\Delta \eta^{-\alpha}}{\Gamma (2-\alpha)} \sum_{p=0}^k \delta_p\left(u_i^{k+1-p}-u_i^{k-p}\right)+\mathcal{O}(\Delta \eta^{2-\alpha}), \tag{2.1}
\end{equation}
where, $u_i^k=u(s_i,\eta_k), \quad \delta_p = (p+1)^{1-\alpha} - p^{1-\alpha}.$ Moreover, the coefficients $\delta _p$ satisfies the following relations 
   \begin{itemize}
       \item[i.]  $\delta _0 =1$,
       \item [ii.] $\delta _p>0, \quad \mbox{ for every } 0\leq p \leq N$,
       \item [iii.] $\delta _{p-1}>\delta _ p, \mbox{ for every } 1\leq p \leq N.$
   \end{itemize}
Now we discretize the space derivatives by using differential quadrature method.   Let  $M\in \mathbb{Z}^+$,  partition the spatial part of the  domain  into $M$ subintervals. Let $a=s_{0}<s_{1}<\ldots <s_{M}=b$ be the corresponding  $M+1$ nodal points  with uniform  the step size $h=\dfrac{b-a}{M}$. Using the differential quadrature approximation, at every nodes  $s_{i},\: i=0,1,\ldots ,M ,$ to the spatial derivatives, the derivatives takes the form 
\begin{equation}\label{eq2.2}
u_{s}(s_{i}, \eta)= \sum_{j=0}^{M} p_{i j}^{(1)} u\left(s_{j}, \eta\right),\tag{2.2}
\end{equation}
\begin{equation}\label{eq2.3}
u_{ss}(s_{i}, \eta)= \sum_{j=0}^{M} p_{i j}^{(2)} u\left(s_{j}, \eta\right),\tag{2.3}
\end{equation}
where, the coefficients  $p_{i j}^{(1)}, p_{i j}^{(2)} ,\: i, j =0,1,\ldots , M$ are the weights for the approximation. The modified cubic B-spline functions constructed by A. Babu \cite{A.Babu} are used to  determine the weights. This methodology is discussed in the upcoming subsection.
 Combining the time discretization scheme (2.1) with spatial discretization (\ref{eq2.2}) and (\ref{eq2.3}) we can derive the full discretization scheme for (\ref{eq1.5}) as follows:
 
\begin{equation}\label{eq2.4}
  \dfrac{\Delta \eta^{-\alpha}}{\Gamma (2-\alpha)} \sum_{p=0}^k \delta_p\left(u_i^{k+1-p}-u_i^{k-p}\right)  = a \sum_{j=0}^{M} p_{i j}^{(2)} u_j^{k+1}+b\sum_{j=0}^{M} p_{i j}^{(1)} u_j^{k+1}+cu_i^{k+1}+f_i^{k+1}.\tag{2.4}
\end{equation}
Multiplying by $d=\Delta \eta^{\alpha}\Gamma (2-\alpha)$ on both sides we get
\begin{equation}\label{eq2.5}
   \sum_{p=0}^k \delta_p\left(u_i^{k+1-p}-u_i^{k-p}\right)  = ad \sum_{j=0}^{M} p_{i j}^{(2)} u_j^{k+1}+bd\sum_{j=0}^{M} p_{i j}^{(1)} u_j^{k+1}+cdu_i^{k+1}+df_i^{k+1}.\tag{2.5}
\end{equation}
Let \[\textbf{X}  =\begin{bmatrix}
p_{00}^{(1)} & p_{01}^{(1)} & \cdots & p_{0M}^{(1)} \\
p_{10}^{(1)} & p_{11}^{(1)} & \cdots & p_{1M}^{(1)} \\
\vdots & \vdots & \ddots & \vdots \\
p_{M0}^{(1)} & p_{M1}^{(1)} & \cdots & p_{MM}^{(1)} 
\end{bmatrix} \textnormal{be the weights corresponding to the first order derivative},
\]
\vspace{0.1in}
 \[\textbf{Y}  =\begin{bmatrix}
p_{00}^{(2)} & p_{01}^{(2)} & \cdots & p_{0M}^{(2)} \\
p_{10}^{(2)} & p_{11}^{(2)} & \cdots & p_{1M}^{(2)} \\
\vdots & \vdots & \ddots & \vdots \\
p_{M0}^{(2)} & p_{M1}^{(2)} & \cdots & p_{MM}^{(2)} 
\end{bmatrix} \textnormal{be the weights corresponding to the second order derivative},
\]
and $\textbf{I}$ be the $(M+1)\times (M+1)$ identity matrix, then we can represent the discretization scheme (\ref{eq2.5}) in matrix form as 

\begin{equation}\label{eq2.6}
     \textbf{LU}^{1}=\textbf{U}^{0}+d\textbf{F}^{1},
    \tag{2.6}
\end{equation}
for $k\geq 1$,
\begin{equation}\label{eq2.7}
     \textbf{LU}^{k+1}=  \sum_{p=0}^{k-1}(\delta_p - \delta_{p+1})\textbf{U}^{k-p} +\textbf{U}^{0}+d\textbf{F}^{k+1},
    \tag{2.7}
\end{equation}
where,
$$\displaystyle \textbf{L}= (1+cd)\textbf{I}- \frac{1}{h^2}ad\textbf{Y}-\frac{1}{h}bd\textbf{X},$$
\[
\begin{aligned}
   \textbf{U}^{k+1}& = [u_0,\quad u_1,\quad \ldots ,\quad u_M]^T ,\quad u_i =u(s_i, \eta_{k+1}),
\end{aligned}
\]
\[
\begin{aligned}
   \textbf{F}^{k+1}& = [f_0,\quad f_1,\quad \ldots ,\quad f_M]^T ,\quad f_i =f(s_i, \eta_{k+1}).
\end{aligned}
\]

\subsection{Modified Splines and Calculation of Weights}
This subsection deals with the cubic B-splines and their modifications to find the differential quadrature weights which is discussed in \cite{A.Babu}. For each $j\in \mathbb{Z}$ the standard cubic B-spline $C_j(s)$ which is symmetric about the node point $s_j$ is defined by

\[ 
C_j(s) := \frac{1}{h^3}
\begin{cases}
(s - s_{j-2})^3, & s \in [s_{j-2}, s_{j-1}), \\
(s - s_{j-2})^3 - 4(s - s_j)^3, & s \in [s_{j-1}, s_j), \\
(s_{j+2} - s)^3 - 4(s_{j+1} - s)^3, & s \in [s_j, s_{j+1}), \\
(s_{j+2} - s)^3, & s \in [s_{j+1}, s_{j+2}), \\
0, & \text{otherwise}.
\end{cases} \tag{2.8}
\]
Note that in the interval $[s_{j-2},\mbox{ } s_{j+2}]$,  $C_j$ is a twice continuously differentiable function. Table 1 explicitly gives the function values of $C_j$'s and its derivatives at each node point.
\begin{table}[H]
\centering
\renewcommand{\arraystretch}{1.8} 
\setlength{\tabcolsep}{20pt} 

\begin{tabular}{c|ccccc}
\toprule
$s$ & $s_{j-2}$ & $s_{j-1}$ & $s_j$ & $s_{j+1}$ & $s_{j+2}$ \\
\midrule
$C_j(x)$ & 0 & 1 & 4 & 1 & 0 \\
$C_j'(x)$ & 0 & $\dfrac{3}{h}$ & 0 & $-\dfrac{3}{h}$ & 0 \\
$C_j''(x)$ & 0 & $\dfrac{6}{h^2}$ & $-\dfrac{12}{h^2}$ & $\dfrac{6}{h^2}$ & 0 \\
\bottomrule
\end{tabular}
\caption{Derivatives of cubic splines at each node}
\end{table}We can observe that the cubic splines $C_{-1}, C_0, C_1, C_{M-1}, C_{M}, C_{M+1}$ are not fully supported in the spatial domain $[a,b]$. Therefore, such standard cubic splines near the boundary must be modified. An optimally accurate modification has been proposed in \cite{A.Babu} and is as follows:

\[
\begin{aligned} \displaystyle 
\tilde{C}_0 &:= C_0 + 4C_{-1}, \\
\tilde{C}_1 &:= C_1 - \tfrac{7}{2}C_{-1} + \tfrac{5}{8}C_0, \\
\tilde{C}_2 &:= C_2 + \tfrac{88}{37}C_1 - \tfrac{21}{37}C_0 - \tfrac{4}{37}C_1, \\
\tilde{C}_3 &:= C_3 - C_{-1} + \tfrac{1}{4}C_0 - \tfrac{1}{4}C_2, \\
\tilde{C}_j &:= C_j, \qquad \mbox{ for all } j=4,5,\ldots, M-4,
\end{aligned}
\qquad
\begin{aligned} \displaystyle 
\tilde{C}_M &:= C_M + 4C_{M+1}, \\
\tilde{C}_{M-1} &:= C_{M-1} - \tfrac{7}{2}C_{M+1} + \tfrac{5}{8}C_M, \\
\tilde{C}_{M-2} &:= C_{M-2} + \tfrac{88}{37}C_{M+1} - \tfrac{21}{37}C_M - \tfrac{4}{37}C_{M-1}, \\
\tilde{C}_{M-3} &:= C_{M-3} - C_{M+1} + \tfrac{1}{4}C_M - \tfrac{1}{4}C_{M-2}.
\end{aligned}
\]
By using this modified splines we get the matrix equation: $$\textbf{AX}^{T}=\textbf{B},$$ where, the matrix $\textbf{A} \mbox{ and } \textbf{B}$ are $(M+1) $ order matrices respectively given by 
\[
\mathbf{A}=\left[\begin{array}{cccccccccccc}
8 & 1 & 0 & 0 & 0 & 0 &0& \cdots &0& 0 & 0 & 0 \\
0 & \frac{37}{8} & 1 & 0 & 0 & 0 & 0&\cdots & 0&0 & 0 & 0 \\
0 & 0 & \frac{144}{37} & 1 & 0 & 0 & 0&\cdots &0& 0 & 0 & 0 \\
0 & 0 & 0 & \frac{15}{4} & 1 & 0 & 0&\cdots & 0&0 & 0 & 0 \\
0 & 0 & 0 & 1 & 4 & 1 & 0&\cdots &0& 0 & 0 & 0 \\
0 & 0 & 0 & 0& 1 & 4 & 1 & \cdots &0& 0 & 0 &0\\
\vdots & \vdots & \vdots & \vdots & \vdots& \ddots & \ddots & \ddots & \vdots & \vdots& \vdots & \vdots \\
0 & 0 & 0 &0&0& \cdots & 1 & 4 & 1 & 0 & 0 & 0 \\
0 & 0 & 0 &0&0& \cdots & 0 & 1 & \frac{15}{4} & 0 & 0 & 0 \\
0 & 0 & 0 & 0&0&\cdots & 0 & 0 & 1 & \frac{144}{37} & 0 & 0 \\
0 & 0 & 0 & 0&0&\cdots & 0 & 0 & 0 & 1 & \frac{37}{8} & 0 \\
0 & 0 & 0 &0&0& \cdots & 0 & 0 & 0 & 0 & 1 & 8
\end{array}\right],\]
\vspace{0.1in}
Here,  the   coefficient matrix  $\textbf{A}$ is  invertible, hence the weights associated with (\ref{eq2.2}) can be calculated  from the matrix equation: $$\textbf{X}^T=\textbf{A}^{-1}\textbf{B}.$$

\[
\mathbf{B}=\left[\begin{array}{cccccccccccc}
-12 & -3 & 0 & 0 & 0 & 0 &0 & \cdots& 0 & 0 & 0 & 0 \\
\frac{27}{2} & -\frac{15}{8} & -3 & 0 & 0 & 0 &0 & \cdots& 0 & 0 & 0 & 0 \\
-\frac{276}{37} & \frac{174}{37} & \frac{12}{37} & -3 & 0 & 0 & 0 &\cdots& 0 & 0 & 0 & 0 \\
3 & -\frac{3}{2} & 3 & \frac{3}{4} & -3 & 0 &0 & \cdots & 0& 0 & 0 & 0 \\
0 & 0 & 0 & 3 & 0 & -3 & 0 &\cdots& 0 & 0 & 0 & 0 \\
0& 0 & 0 & 0 & 3 & 0 & -3 & \cdots& 0 & 0 & 0 & 0 \\
\vdots & \vdots & \vdots & \vdots &\vdots & \ddots & \ddots & \ddots & \vdots & \vdots & \vdots&\vdots \\
0 & 0 & 0 & 0 &0 &\cdots & 3 & 0 & -3 & 0 & 0 & 0 \\
0 & 0 & 0 & 0 &0 &\cdots & 0 & 3 & -\frac{3}{4} & -3 & \frac{3}{2} & -3 \\
0 & 0 & 0 & 0 &0 &\cdots & 0 & 0 & 3 & -\frac{12}{37} & -\frac{174}{37} & \frac{276}{37} \\
0 & 0 & 0 &0 & 0 &\cdots & 0 & 0 & 0 & 3 & \frac{15}{8} & -\frac{27}{2} \\
0 & 0 & 0 & 0 &0 &\cdots & 0 & 0 & 0 & 0 & 3 & 12
\end{array}\right].
\]
 The weights corresponding to the second order derivative (\ref{eq2.3}) is given by the matrix $\textbf{Y}$ which can be find out by taking the square of the matrix  $\textbf{X}$, the weights corresponding to the first order derivative \cite{Shu}.
\section{Theoretical Stability}
Consider the discretization operator $\mathcal{I}_h$, where $h$ represents the mesh parameter involved. We examine the stability of the numerical scheme associated with $\mathcal{I}_h$ in the sense that it is stable whenever $\mathcal{I}_h^{-1}$ is uniformly bounded \cite{roger}. Namely, there exists a constant $\mathcal{C}$ independent of the mesh parameter representative $h$, with

$$
\left\|\mathcal{I}_h^{-1}\right\|_\infty \leq \mathcal{C}.
$$
The theorem \ref{theorem1} justifies the effective maximum norm stability of the scheme \ref{eq2.7}.
\begin{lemma}\label{lem1}
	Let $a, b, c$ and $d$ be the parameters defined in (\ref{eq1.5},\ref{eq2.5}), and let $$\mathbf{P}=\frac{a}{h^2} \mathbf{Y}+\frac{b}{h} \mathbf{X},$$
    where $h$, is the spacial mesh parameter. Then, $$
\|\mathbf{P}\|_{\infty} \leq \frac{|a|}{h^2} \cdot R_Y+\frac{|b|}{h} \cdot R_X, 
$$
for some positive constants $R_X$ and $R_Y$.
\end{lemma}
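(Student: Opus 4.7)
The strategy is to apply two elementary properties of the induced maximum row-sum norm, namely subadditivity and absolute scalar-homogeneity, after first observing that the weight matrices $\mathbf{X}$ and $\mathbf{Y}$ themselves carry no dependence on the spatial mesh parameter $h$.

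First, I would recall from Section 2.2 that $\mathbf{X}$ is defined as the transpose of the solution to the linear system $\mathbf{A}\mathbf{X}^T = \mathbf{B}$, where both $\mathbf{A}$ and $\mathbf{B}$ are specified by explicit numerical constants that do not involve $h$, and that $\mathbf{Y}$ is then obtained as $\mathbf{X}^2$ by \cite{Shu}. A brief check on the explicit form of $\mathbf{A}$ shows that its first few rows are upper bidiagonal with non-zero diagonal entries $8,\frac{37}{8},\frac{144}{37},\frac{15}{4}$, its interior is strictly diagonally dominant (diagonal $4$ against off-diagonals of $1$ each), and the right-boundary structure mirrors the left; hence $\mathbf{A}$ is invertible and $\mathbf{X}, \mathbf{Y}$ are well-defined real matrices whose entries are fixed numerical constants (once $M$ is fixed). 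I would therefore set $R_X := \|\mathbf{X}\|_\infty$ and $R_Y := \|\mathbf{Y}\|_\infty$, which are strictly positive finite constants, the positivity being immediate from $\mathbf{B}\ne 0$ (whence $\mathbf{X}\ne 0$) and $\mathbf{X}\ne 0$ (whence $\mathbf{Y}=\mathbf{X}^2\ne 0$ provided $\mathbf{X}$ is non-nilpotent, which follows from invertibility of the underlying DQM construction).

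Second, I would apply the triangle inequality and the positive homogeneity of the operator maximum-row-sum norm directly to the definition of $\mathbf{P}$:
$$\|\mathbf{P}\|_\infty \;=\; \left\|\frac{a}{h^2}\mathbf{Y} + \frac{b}{h}\mathbf{X}\right\|_\infty \;\leq\; \frac{|a|}{h^2}\,\|\mathbf{Y}\|_\infty + \frac{|b|}{h}\,\|\mathbf{X}\|_\infty \;=\; \frac{|a|}{h^2}\,R_Y + \frac{|b|}{h}\,R_X,$$
which is exactly the claimed bound.

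The proof has no genuine obstacle; the substantive content of the lemma is the clean separation of $h$-dependence into the explicit scalar factors $a/h^2$ and $b/h$, which is what makes the bound useful for the subsequent Neumann-series stability argument in Section 3. The only delicate point one might anticipate, and which the present statement deliberately sidesteps, is whether $R_X$ and $R_Y$ can be chosen \emph{uniformly} in $M$; establishing such uniformity would require a finer analysis of $\mathbf{A}^{-1}$, exploiting exponential decay of its entries away from the diagonal (a standard consequence of diagonal dominance in tridiagonal systems). Since the lemma as stated merely requires existence of positive constants, the triangle-inequality argument above is sufficient.
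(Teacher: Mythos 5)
Your triangle-inequality step is fine, but the way you choose the constants misses the actual content of the lemma. Setting $R_X:=\|\mathbf{X}\|_\infty$ and $R_Y:=\|\mathbf{Y}\|_\infty$ makes them constants only for a \emph{fixed} $M$; since $h=(b-a)/M$, these quantities are in principle functions of the mesh parameter, and the advertised ``clean separation of the $h$-dependence'' becomes illusory. This matters downstream: the lemma exists to feed Theorem~3.1, whose whole point is a bound on $\|\mathbf{L}^{-1}\|_\infty$ that is uniform as the mesh is refined ($M\to\infty$, $h\to 0$), and whose smallness condition $d\,\|\mathbf{P}\|_\infty<1+cd$ must remain verifiable in that limit. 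With $M$-dependent $R_X,R_Y$ the lemma is essentially vacuous for that purpose, which is exactly the point you flag and then set aside.

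The paper closes this gap with an argument you did not supply, and it is lighter than the ``exponential decay of $\mathbf{A}^{-1}$'' analysis you anticipate. Since $\mathbf{X}^T=\mathbf{A}^{-1}\mathbf{B}$, one writes $\|\mathbf{X}\|_\infty\le\|\mathbf{B}^T\|_\infty\,\|(\mathbf{A}^T)^{-1}\|_\infty$ and invokes Varah's bound $\|(\mathbf{A}^T)^{-1}\|_\infty\le 1/\beta$, where $\beta=\min_k\bigl(|a_{kk}|-\sum_{l\neq k}|a_{lk}|\bigr)$ is the minimum dominance of $\mathbf{A}^T$. Because the boundary rows of $\mathbf{A}$ and $\mathbf{B}$ are fixed and the interior rows repeat, both $\beta$ and $\|\mathbf{B}^T\|_\infty$ take the same value for every $M\ge 8$, so $R_X:=\|\mathbf{B}^T\|_\infty/\beta$ is genuinely mesh-independent; then $R_Y:=R_X^2\ge\|\mathbf{X}\|_\infty^2\ge\|\mathbf{X}^2\|_\infty=\|\mathbf{Y}\|_\infty$ by submultiplicativity, using $\mathbf{Y}=\mathbf{X}^2$. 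The final display is the same triangle inequality you wrote, but with these uniform constants. (A small additional quibble: your positivity argument for $R_Y$ via ``invertibility of the underlying DQM construction'' is shaky, since a differentiation matrix such as $\mathbf{X}$ annihilates constants and is not invertible; with the paper's choice $R_Y=R_X^2>0$ the issue does not arise.)
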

\begin{proof}
	Under the assumptions on $\mathbf A=(a_{ij}) \in \mathbb R^{M+1 \times M+1}$, suggested by Varah \cite{varah}, we can combine the bounds as follows:
$$
R_X=\frac{1}{\beta}\cdot \|\mathbf{B^T}\|_\infty \geq \left\|\mathbf{(A^T)}^{-1}\right\|_\infty\|\mathbf{B^T}\|_\infty \geq \left\|(\mathbf{A}^{-1} \mathbf{B})^T\right\|_\infty,
$$
where, $$\displaystyle \beta=\min_k\left( |a_{kk}|-\sum_{l\neq k}|a_{lk}| \right),$$ is the minimum dominance associated with the matrix $\mathbf{A^T}$, and $\|\mathbf{B^T}\|_\infty$ can be estimated numerically by finding the $1$-norm of $\mathbf{B}=(b_{ij}) \in \mathbb R^{M+1 \times M+1}$, for any size $M \geq 8$. Then, by submultiplicativity:
$$
R_Y=R_X^2 \geq \|\mathbf{X}\|_{\infty}^2 \geq \left\|\mathbf{X}^2\right\|_{\infty},
$$
and, consequently,
\begin{equation}\label{eq3.1}
\|\mathbf{P}\|_{\infty}=\left\|\frac{a}{h^2} \mathbf{Y}+\frac{b}{h} \mathbf{X}\right\|_{\infty} {\leq}\left\|\frac{a}{h^2} \mathbf{Y}\right\|_{\infty}+\left\|\frac{b}{h} \mathbf{X}\right\|_{\infty}=\frac{|a|}{h^2}\cdot R_Y+\frac{|b|}{h}\cdot R_X
 \tag{3.1}
\end{equation}

\end{proof}

\begin{lemma}\label{lem3}(Neumann Series Theorem \cite{nuemann}).
Let $\mathscr X$ be a Banach space, and $A \in C L(\mathscr X),$ the space of all continuous linear operators on $\mathscr X$, be such that $\|A\|_\infty<1$.
Then \\\\
(1) $I-A$ is invertible in $C L(\mathscr X)$,\\
(2) $\displaystyle (I-A)^{-1}=I+A+\cdots+A^n+\cdots=\sum_{n=0}^{\infty} A^n$,\\
(3) $\displaystyle \left\|(I-A)^{-1}\right\|_\infty \leqslant \frac{1}{1-\|A\|_\infty}$.\\\\
In particular, $I-A: \mathscr X \rightarrow \mathscr X$ is bijective: for each $y \in \mathscr X$, there exists a unique solution $x \in \mathscr X$ of the equation $x-A x=y$, and moreover,
$$
\|x\|_\infty \leq \left(\frac{1}{1-\|A\|_\infty}\right)\|y\|_\infty,
$$
so that $x$ depends continuously on $y$.
\end{lemma}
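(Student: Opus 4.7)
The plan is to establish the convergence of the formal series $\sum_{n=0}^{\infty} A^n$ in the operator norm by treating its partial sums as a Cauchy sequence in the Banach space $CL(\mathscr X)$. First I would set $S_N := \sum_{n=0}^{N} A^n$ and exploit submultiplicativity of the operator norm to get $\|A^n\|_\infty \le \|A\|_\infty^n$. Writing $q := \|A\|_\infty < 1$, the tail estimate
$$
\|S_{N+p} - S_N\|_\infty \le \sum_{n=N+1}^{N+p} q^n \le \frac{q^{N+1}}{1-q}
$$
shows $\{S_N\}$ is Cauchy. Since $\mathscr X$ is Banach, $CL(\mathscr X)$ equipped with the operator norm is itself Banach, so the limit $S := \lim_{N \to \infty} S_N$ exists in $CL(\mathscr X)$. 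This simultaneously proves the series representation in assertion (2).

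Next I would verify that $S$ is a two-sided inverse of $I-A$. A direct telescoping calculation gives
$$
(I-A)\,S_N = S_N\,(I-A) = I - A^{N+1}.
$$
Because $\|A^{N+1}\|_\infty \le q^{N+1} \to 0$, continuity of left/right composition by the fixed operator $I-A$ lets me pass $N \to \infty$ to obtain $(I-A)S = S(I-A) = I$, which establishes (1). For the norm bound (3), I would apply the triangle inequality to each partial sum, pass to the limit, and sum the geometric series:
$$
\|S\|_\infty \le \lim_{N \to \infty} \sum_{n=0}^{N} q^n = \frac{1}{1-q} = \frac{1}{1-\|A\|_\infty}.
$$

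For the \emph{in particular} clause, bijectivity of $I-A$ on $\mathscr X$ is an immediate consequence of (1): given $y \in \mathscr X$, the unique solution of $x - Ax = y$ is $x = (I-A)^{-1} y$, and applying (3) yields the stated continuous-dependence inequality. The argument is entirely routine; the only conceptual point that needs to be cleanly stated is the completeness of $CL(\mathscr X)$ inherited from $\mathscr X$, which is precisely what allows the geometric-type series to converge in operator norm. I do not anticipate a genuine obstacle beyond correctly invoking this standard fact and being careful with the two-sided nature of the inversion when composing the limit with $I - A$.
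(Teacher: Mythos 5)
Your argument is correct and complete: it is the standard proof of the Neumann series theorem (geometric tail estimate, completeness of $CL(\mathscr X)$, the telescoping identity $(I-A)S_N = S_N(I-A) = I - A^{N+1}$, and the geometric bound for (3)), which is exactly the argument in the textbook the paper cites. The paper itself states this lemma without proof, quoting it from \cite{nuemann}, so there is nothing to compare beyond noting that your write-up supplies the expected standard details, including the key point that $CL(\mathscr X)$ inherits completeness from $\mathscr X$.
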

\begin{theorem}\label{theorem1}
Let $\mathbf L$ be the discrete operator defined by the equation \ref{eq2.7} as
\begin{equation}\label{eq3.2}
     \mathbf L(i)\textbf{U}^{k+1}=  \sum_{p=0}^{k-1}(\delta_p - \delta_{p+1})\textbf{U}^{k-p}(i) +\textbf{U}^{0}(i)+d\textbf{F}^{k+1}(i),
 \tag{3.2}
\end{equation}
where, $\mathbf L(i)$ and $\mathbf F(i)^{k+1}$, denote the $i$ th row of $\mathbf L$, and $\mathbf F^{k+1}$ respectively, where as $\textbf{U}^{k-p}(i)$ denotes the $i$ th coordinate of $\textbf{U}^{k-p}$. Then the discrete operator $\mathbf L$ satisfies the maximum norm estimate $$\|\mathbf L^{-1}\|_\infty \leq \mathcal C,$$ for some constant $\mathcal C>0$, provides an upper bound independent of mesh parameter $h$.
\end{theorem}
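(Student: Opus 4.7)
The plan is to factor the operator $\mathbf{L}=(1+cd)\mathbf{I}-d\mathbf{P}$, with $\mathbf{P}=\tfrac{a}{h^2}\mathbf{Y}+\tfrac{b}{h}\mathbf{X}$, so that it has the shape required by the Neumann Series Theorem. Writing
$$\mathbf{L}=(1+cd)\!\left(\mathbf{I}-\mathbf{A}\right),\qquad \mathbf{A}:=\frac{d}{1+cd}\,\mathbf{P},$$
reduces the problem to verifying $\|\mathbf{A}\|_\infty<1$ and then invoking Lemma \ref{lem3}. The scalar prefactor $1+cd$ is harmless: $c=r>0$ and $d=k^{\alpha}\Gamma(2-\alpha)>0$, so $1+cd\geq 1$, which means any bound on $\|(\mathbf{I}-\mathbf{A})^{-1}\|_\infty$ transfers directly into a bound on $\|\mathbf{L}^{-1}\|_\infty$.

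Next, I would bound $\|\mathbf{A}\|_\infty$ by appealing to Lemma \ref{lem1}:
$$\|\mathbf{A}\|_\infty \;\leq\; \frac{d}{1+cd}\|\mathbf{P}\|_\infty \;\leq\; \frac{d}{1+cd}\!\left(\frac{|a|}{h^2}R_Y+\frac{|b|}{h}R_X\right).$$
Since $d=k^{\alpha}\Gamma(2-\alpha)$, this bound is controlled by the dimensionless ratio $k^{\alpha}/h^{2}$. The theorem claims uniformity in $h$, so the natural (and, in light of the abstract's ``reasonable conditions on the mesh parameters,'' intended) hypothesis is that the mesh parameters satisfy a smallness constraint of the form
$$\frac{k^{\alpha}\Gamma(2-\alpha)}{1+cd}\!\left(\frac{|a|}{h^{2}}R_{Y}+\frac{|b|}{h}R_{X}\right)\leq \theta$$
for some fixed $\theta\in(0,1)$ independent of $h$ and $k$. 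Under such a condition, $\|\mathbf{A}\|_\infty\leq\theta<1$.

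With $\|\mathbf{A}\|_\infty<1$ in hand, Lemma \ref{lem3} applies: $\mathbf{I}-\mathbf{A}$ is invertible and
$$\|(\mathbf{I}-\mathbf{A})^{-1}\|_\infty\;\leq\;\frac{1}{1-\|\mathbf{A}\|_\infty}\;\leq\;\frac{1}{1-\theta}.$$
Consequently, using $1+cd\geq 1$,
$$\|\mathbf{L}^{-1}\|_\infty \;=\; \frac{1}{1+cd}\,\|(\mathbf{I}-\mathbf{A})^{-1}\|_\infty \;\leq\; \frac{1}{(1+cd)(1-\theta)}\;\leq\;\frac{1}{1-\theta}\;=:\;\mathcal{C},$$
and $\mathcal{C}$ depends only on $\theta$, $a$, $b$, $c$, and the universal constants $R_X,R_Y$ from Lemma \ref{lem1}, not on $h$. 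Finally, the representation \eqref{eq3.2} of the scheme follows row-by-row from \eqref{eq2.7}, so the invertibility and the maximum-norm estimate are exactly what is needed for the stability claim.

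The main obstacle I anticipate is the bookkeeping around the mesh condition: one must verify that the constants $R_X,R_Y$ produced by the Varah-type argument in Lemma \ref{lem1} are genuinely mesh-independent (they are, because they come from the fixed matrices $\mathbf{A},\mathbf{B}$ of the B-spline construction), and one must state the smallness hypothesis on $k^{\alpha}/h^{2}$ cleanly so the bound $\theta$ is itself independent of $h$. Everything else is just the Neumann series machinery applied to the splitting above.
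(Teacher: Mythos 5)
Your proposal is correct and follows essentially the same route as the paper: factor $\mathbf{L}=(1+cd)(\mathbf{I}-\mathbf{Q})$ with $\mathbf{Q}=\frac{d}{1+cd}\mathbf{P}$, bound $\|\mathbf{P}\|_\infty$ via Lemma \ref{lem1}, and invoke the Neumann series theorem (Lemma \ref{lem3}) under a mesh condition ensuring $\|\mathbf{Q}\|_\infty<1$. Your only deviation is a refinement: by fixing $\theta\in(0,1)$ in the mesh condition you make the resulting constant $\mathcal{C}=\frac{1}{1-\theta}$ explicitly mesh-independent, whereas the paper states the condition as $d\|\mathbf{P}\|_\infty<1+cd$ and concludes $\|\mathbf{L}^{-1}\|_\infty\leq\frac{1}{|1+cd|-|d|\,\|\mathbf{P}\|_\infty}\leq\mathcal{C}$ without pinning down the margin.
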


\begin{proof}
Let

$$
\mathbf{P} = \frac{a}{h^2} \mathbf{Y} + \frac{b}{h} \mathbf{X}.
$$
Then $\mathbf{L}$ can be expressed as
$$
\mathbf{L} = (1 + c d)\, \mathbf{I} - d\, \mathbf{P}.
$$
Now, as $c=r>0$, we factor out the scalar $1+cd$ to obtain,
$$
\mathbf{L} = (1 + c d) \left[ \mathbf{I} - {\frac{d}{1 + c d} \mathbf{P}} \right].
$$
Let $\displaystyle \mathbf{Q} = \frac{d}{1 + c d} \mathbf{P}$, so that $$\mathbf{L} = (1 + c d)(\mathbf{I} - \mathbf{Q}).$$ 
Subject to the sufficient conditions on mesh parameters $h$ and $\Delta \eta$ described by, 
$$d\|\mathbf P\|_\infty < 1+cd,$$ we ensure that matrix norm $\|\mathbf{Q}\|_\infty < 1$. Then, $\mathbf{I} - \mathbf{Q}$ is invertible by Lemma \ref{lem3}, and its inverse is given by the Neumann series:

$$
(\mathbf{I} - \mathbf{Q})^{-1} = \sum_{k=0}^{\infty} \mathbf{Q}^k.
$$
Moreover, we have the bound (by the Neumann series convergence estimate):
$$
\left\| (\mathbf{I} - \mathbf{Q})^{-1} \right\|_\infty \leq \frac{1}{1 - \|\mathbf{Q}\|_\infty}.
$$
It follows that $\mathbf{L}$ is invertible and its inverse is given by
$$
\mathbf{L}^{-1} = \frac{1}{1 + c d} (\mathbf{I} - \mathbf{Q})^{-1}.
$$
Taking norms, we obtain the bound
$$
\left\| \mathbf{L}^{-1} \right\|_\infty \leq \frac{1}{|1 + c d|} \left\| (\mathbf{I} - \mathbf{Q})^{-1} \right\|_\infty \leq \frac{1}{|1 + c d| - |d| \cdot \|\mathbf{P}\|_\infty} \leq\mathcal C.
$$
\end{proof}
\noindent Clearly, Theorem (\ref{theorem1}) holds for sufficiently smaller mesh parameters $h$. Therefore, the uniform boundedness of the inverse operator defined by
\ref{eq3.2} indicates the maximum norm stability of the numerical scheme \ref{eq2.7} under reasonable conditions on mesh parameters keeping arbitrary volatility and the rate of returns.

\section{Numerical Findings}
To illustrate the accuracy of the solution and the order of convergence of suggested numerical approach, two examples with exact solutions are provided in this section. Let $u_j$ and $\tilde{u}_j$ be the exact and numerical solution at the node point $j$ respectively.  The $L_2$ norm error is evaluated by using 
\begin{equation}\label{eq4.1}
    L_2 = \sqrt{ h \sum_{j=0}^{M} \left| u^{}_j - \tilde{u}_j \right|^2 }, \tag{4.1}
\end{equation}
and $L_{\infty}:$ the sup norm error by
  \begin{equation}\label{eq4.2}
    L_{\infty} = \max_{0 \leq j \leq M} \left| u_j - \tilde{u}_j \right|. \tag{4.2}
\end{equation}
Furthermore, for evaluating  the spatial order of convergence $OC_s$ 
 \begin{equation}\label{eq4.3}
    OC_s = \frac{\log(E^{h_1}/E^{h_2})}{\log(h_1/h_2)}, \tag{4.3}
\end{equation}
where, $E^{h_1}$ and $E^{h_2}$ represent the errors at space mesh sizes $h_1$ and $h_2$, respectively and for order of convergence $OC_\eta$ in time 
 \begin{equation}\label{eq4.4}
    OC_\eta = \frac{\log(E^{k_1}/E^{k_2})}{\log(k_1/k_2)},  \tag{4.4}
\end{equation}
where, $E^{k_1}$ and $E^{k_2}$ represent the errors at time step sizes $k_1$ and $k_2$, respectively are used.
\begin{example}
   \textnormal{Here, we  consider the time fractional BS equation which takes the following form:}

\begin{equation}\label{eq4.5}
\left\{\begin{array}{l}
{ }_0 D_\eta^\alpha u(s, \eta)=\displaystyle a \frac{\partial^2 u(s, \eta)}{\partial s^2}+b \frac{\partial u(s, \eta)}{\partial s}-c u(s, \eta)+f(s, \eta), \\
u(0, \eta)=0, \quad u(1, \eta)=0, \\
u(s, 0)=s^2(1-s),
\end{array}\right. \tag{4.5}
\end{equation}
\textnormal{where the source term} \\
\[ f(s, \eta)=\left(\frac{2 \eta^{2-\alpha}}{\Gamma(3-\alpha)}+\frac{2 \eta^{1-\alpha}}{\Gamma(2-\alpha)}\right) s^2(1-s)-(\eta+1)^2\left[a(2-6 s)+b\left(2 s-3 s^2\right)-c s^2(1-s)\right],\]
\textnormal{ is selected so that the exact solution  is given by \cite{Zhang, Yang},}
$$u(s, \eta)=(\eta+1)^2 s^2(1-s).$$
\textnormal{ The parameters values are considered as $$r=0.05, \quad \sigma=0.25, \quad a=\frac{1}{2} \sigma^2, \quad b=r-a, \quad c=r~\textnormal{ and  }~T=1.$$ } 
\end{example}
\noindent The numerical error values and spatial order of convergence ($OC_s$)   for $M=2^m\cdot 10, \quad N=10^{m+1},~m=0,1,2,...$, are displayed in  Table \ref{tab2}.

\begin{table}[H]
\centering
\begin{tabular}{c|c c c c|c c c c}
\toprule
\multirow{2}{*}{M} & \multicolumn{4}{c|}{$\alpha = 0.3$} & \multicolumn{4}{c}{$\alpha = 0.5$} \\ 
& $L_2$ Error & $OC_s$ & $L_{\infty}$ Error & $OC_s$ & $L_2$ Error & $OC_s$ & $L_{\infty}$ Error & $OC_s$ \\ 
\midrule
$2^0 \cdot 10$ & $1.330e{-3}$ &  & $7.293e{-4}$ & & $3.618e{-3}$ &  & $1.779e{-3}$ &  \\
$2^1\cdot 10$ & $4.079e{-5}$ & 5.02 & $1.723e{-5}$ & 5.40 & $1.666e{-4}$ & 4.44 & $6.259e{-5}$ & 4.82 \\
$2^2\cdot 10$ & $1.189e{-6}$ & 5.10 & $3.696e {-7}$ & 5.54 & $7.430e{-6}$ & 4.49 & $2.039e{-6}$ & 4.94 \\
$2^3\cdot 10$ & $3.421e{-8}$ & 5.12 & $5.191e {-9}$ & 6.15 & $3.307e{-7}$ & 4.49 & $6.566e{-8}$ & 4.96 \\
\bottomrule
\end{tabular}

\vspace{0.5cm}

\begin{tabular}{c|c c c c|c c c c}
\toprule
\multirow{2}{*}{M} & \multicolumn{4}{c|}{$\alpha = 0.7$} & \multicolumn{4}{c}{$\alpha = 0.9$} \\ 
& $L_2$ Error & $OC_s$ & $L_{\infty}$ Error & $OC_s$ & $L_2$ Error & Order & $L_{\infty}$ Error & $OC_s$ \\ 
\midrule
$2^0\cdot 10$ & $8.308e{-3}$ &  & $3.514e{-3}$ &  & $1.758e{-2}$ &  & $7.813e{-3}$ &  \\
$2^1\cdot 10$ & $5.945e{-4}$ & 3.80 & $1.846e{-4}$ & 4.25 & $1.975e{-3}$ & 3.15 & $6.415e{-4}$ & 3.61 \\
$2^2\cdot 10$ & $4.175e{-5}$ & 3.83 & $9.357e{-6}$ & 4.30 & $2.202e{-4}$ & 3.16 & $5.115e{-5}$ & 3.65 \\
$2^3\cdot 10$ & $2.939e{-6}$ & 3.83 & $4.696e{-7}$ & 4.32 & $2.463e{-5}$ & 3.16 & $4.064e{-6}$ & 3.65 \\
\bottomrule
\end{tabular}
\caption{DQM Error values and $OC_s$ for different $\alpha$ values of Example 4.1}
\label{tab2}
\end{table}
\noindent The tabular data demonstrates that the scheme exhibits high order accuracy in both $L_2$ and $L_\infty$ norms. Notably, there is an enhancement in the order of spatial convergence whenever $\alpha$ tends to 0 which is due to the memory effects in the fractional model. Table \ref{tab3} displays the error values and order of convergence in time for $M=80,\quad N=2^m\cdot 10,~m=0,1,2,...$. The data gives the order of convergence in time is $2-\alpha$.  The surface plot of the solutions when $\alpha =0.7$, $M=20$ is given in Figure \ref{Surfacelpot1}. Figure \ref{payoff1}: (a)-(d) depicts the plot of the numerical and exact solution at the payoff for different values of $\alpha$ when $M=20$. Moreover, H. Zhang\cite{Zhang} formed an implicit discrete scheme for the considered  model by the combination of finite difference method(FDM) to the space derivatives and classical L1 scheme for the time fractional derivative. The numerical data in \cite{Zhang} shows that the scheme has spatial order of convergence 2. The numerical error values  for this FDM scheme \cite{Zhang} for different values of $\alpha$ at $M=2^m\cdot 10, \quad N=10^{m+1},~m=0,1,2,...$, are displayed in  Table \ref{tab2(b)}. The mixed alternate segment Crank-Nicolson (MASC-N)  scheme developed by X. Yang\cite{Yang}  has spatial order of convergence  2. On Comparing with these techniques, the results shows the potential of modified  B-spline based DQM  for solving fractional differential equations with precision and stability.

\begin{figure}[H]
    \centering
    \begin{subfigure}[b]{0.45\textwidth}
        \centering
        \includegraphics[width=\textwidth]{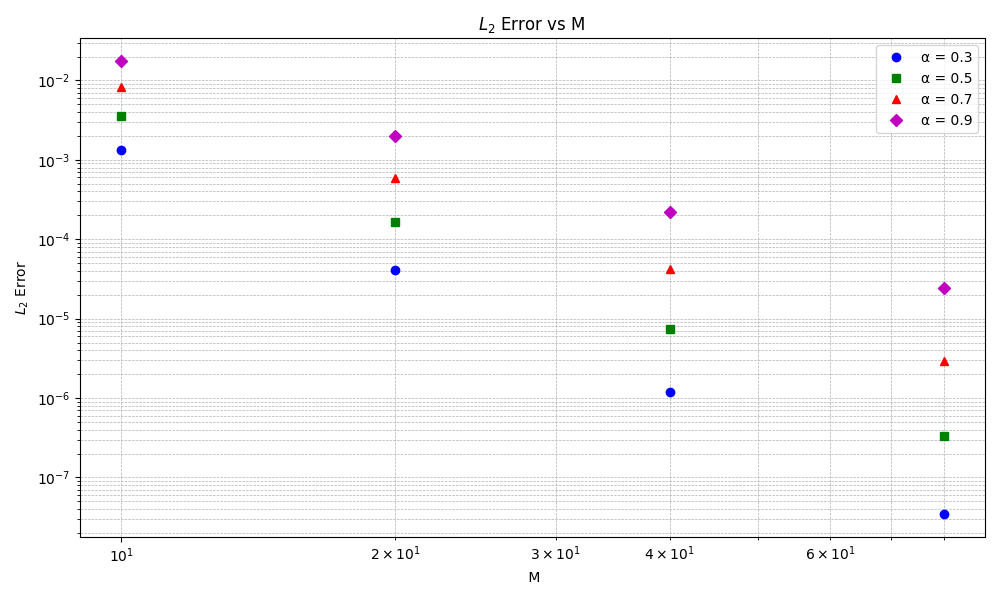}
        \caption{}
    \end{subfigure}
    \hspace{1cm}
    \begin{subfigure}[b]{0.45\textwidth}
        \centering
        \includegraphics[width=\textwidth]{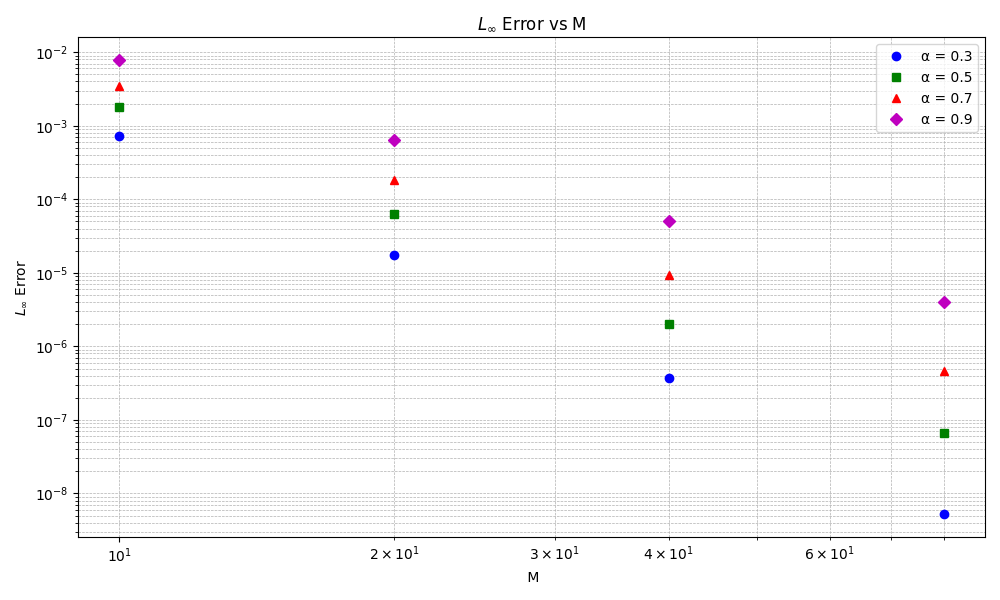}
         \caption{}
    \end{subfigure}
    \caption{(a)-(b) Error for different values of $\alpha$ at $M=20$ of Example 4.1}
    \label{error1}
\end{figure}

\begin{table}[H]
\centering
\begin{tabular}{c|c c c c|c c c c}
\toprule
\multirow{2}{*}{N} & \multicolumn{4}{c|}{$\alpha = 0.3$} & \multicolumn{4}{c}{$\alpha = 0.5$} \\ 
& $L_2$ Error & $OC_\eta$ & $L_{\infty}$ Error & $OC_\eta$ & $L_2$ Error & $OC_\eta$ & $L_{\infty}$ Error & $OC_\eta$ \\ 
\midrule
$2^0 \cdot 10$ & $3.375e{-3}$ &  & $7.293e{-4}$ & & $9.372e{-3}$ &  & $1.779e{-3}$ &  \\
$2^1\cdot 10$ & $1.100e{-3}$ & 1.62 & $2.404e{-4}$ & 1.60 & $3.436e{-3}$ & 1.44 & $6.598e{-4}$ & 1.43 \\
$2^2\cdot 10$ & $3.540e{-4}$ & 1.64 & $7.789e {-5}$ & 1.63 & $1.244e{-3}$ & 1.47 & $2.408e{-6}$ & 1.45 \\
$2^3\cdot 10$ & $1.127e{-4}$ & 1.65 & $2.492e {-5}$ & 1.64 & $4.469e{-4}$ & 1.48 & $8.701e {-5}$ & 1.47 \\
\bottomrule
\end{tabular}

\vspace{0.5cm}

\begin{tabular}{c|c c c c|c c c c}
\toprule
\multirow{2}{*}{N} & \multicolumn{4}{c|}{$\alpha = 0.7$} & \multicolumn{4}{c}{$\alpha = 0.9$} \\ 
& $L_2$ Error & $OC_\eta$ & $L_{\infty}$ Error & $OC_\eta$ & $L_2$ Error & $OC_\eta$ & $L_{\infty}$ Error & $OC_\eta$ \\ 
\midrule
$2^0\cdot 10$ & $6.142e{-1}$ &  & $1.221e{-1}$ &  & $1.38e{0}$ &  & $2.656e{-1}$ &  \\
$2^1\cdot 10$ & $2.567e{-1}$ & 1.26 & $5.099e{-2}$ & 1.26 & $6.601e{-1}$ & 1.06 & $1.261e{-1}$ & 1.07 \\
$2^2\cdot 10$ & $1.068e{-1}$ & 1.27 & $2.113e{-2}$ & 1.27 & $3.109e{-1}$ & 1.09 & $5.933e{-2}$ & 1.09 \\
$2^3\cdot 10$ & $4.365e{-2}$ & 1.29 & $8.639e {-3}$ & 1.29 & $1.459e{-1}$ & 1.09 & $2.782e{-2}$ & 1.09 \\
\bottomrule
\end{tabular}
\caption{Error values and $OC_\eta$ for different $\alpha$ values of Example 4.1}
\label{tab3}
\end{table}

\begin{table}[H]
\centering
\begin{tabular}{c|c c c c|c c c c}
\toprule
\multirow{2}{*}{M} & \multicolumn{4}{c|}{$\alpha = 0.3$} & \multicolumn{4}{c}{$\alpha = 0.5$} \\ 
& $L_2$ Error & $OC_s$ & $L_{\infty}$ Error & $OC_s$ & $L_2$ Error & $OC_s$ & $L_{\infty}$ Error & $OC_s$ \\ 
\midrule
$2^0 \cdot 10$ & $3.523e{-4}$ &  & $2.085e{-4}$ & & $2.605e{-3}$ &  & 1.284$e{-3}$ &  \\
$2^1\cdot 10$ & $4.799e{-4}$ & -0.44 & $1.346e{-4}$ & 0.631 & $3.296e{-4}$ & 2.98 & $9.477e{-5}$ & 3.76 \\
$2^2\cdot 10$ & $1.823e{-4}$ & 1.39 & $3.635e {-5}$ & 1.89 & $1.650e{-4}$ & 0.99 & $3.256e{-5}$ & 1.54 \\
$2^3\cdot 10$ & $6.486e{-5}$ & 1.49 & $9.147e {-6}$ & 1.99 & $6.059e{-5}$ & 1.45 & $8.483e{-6}$ & 1.94 \\
\bottomrule
\end{tabular}

\vspace{0.5cm}

\begin{tabular}{c|c c c c|c c c c}
\toprule
\multirow{2}{*}{M} & \multicolumn{4}{c|}{$\alpha = 0.7$} & \multicolumn{4}{c}{$\alpha = 0.9$} \\ 
& $L_2$ Error & $OC_s$ & $L_{\infty}$ Error & $OC_s$ & $L_2$ Error & Order & $L_{\infty}$ Error & $OC_s$ \\ 
\midrule
$2^0\cdot 10$ & $3.523e{-4}$ &  & $2.084e{-4}$ &  & $1.812e{-2}$ &  & $8.538e{-3}$ &  \\
$2^1\cdot 10$ & $1.979e{-4}$ & 0.83 & $7.503e{-5}$ & 1.47 & $1.611e{-3}$ & 3.49 & $5.509e{-4}$ & 3.95 \\
$2^2\cdot 10$ & $1.191e{-4}$ & 0.73 & $2.363e{-5}$ & 1.67 & $9.230e{-5}$ & 4.12 & $2.480e{-5}$ & 4.47 \\
$2^3\cdot 10$ & $5.352e{-5}$ & 1.15 & $7.385e{-6}$ & 1.67 & $2.806e{-5}$ & 1.71 & $4.436e{-6}$ & 2.48 \\
\bottomrule
\end{tabular}
\caption{FDM Error values and $OC_s$ for different $\alpha$ values of Example 4.1}
\label{tab2(b)}
\end{table}

\begin{example}
 \textnormal{In this example we  consider the time fractional BS equation with non homogeneous boundary condition  having the form:}
\begin{equation}\label{eq4.6}
\left\{\begin{array}{l} \displaystyle 
{ }_0 D_\eta^\alpha u(s, \eta)=a \frac{\partial^2 u(s, \eta)}{\partial s^2}+b \frac{\partial u(s, \eta)}{\partial s}-c u(s, \eta)+f(s, \eta), \\
u(0, \eta)=(\eta +1)^2, \quad u(1, \eta)=3(\eta +1)^2, \\
u(s, 0)=s^3+ s^2+ 1,
\end{array}\right. \tag{4.6}
\end{equation}
\textnormal{where the source term $f(s, \eta)$ is } \\
\[ \left(\frac{2 \eta^{2-\alpha}}{\Gamma(3-\alpha)}+\frac{2 \eta^{1-\alpha}}{\Gamma(2-\alpha)}\right) (s^3+ s^2+ 1)-(\eta+1)^2\left[a(6s+2)+b\left(2 s+3 s^2\right)-c (s^3+ s^2+ 1)\right],\]
\textnormal{is chosen so that the exact solution  is given by \cite{Zhang, Yang}} $$u(s, \eta)=(\eta+1)^2 (s^3+ s^2+ 1).$$
\textnormal{Here we take the parameters values as $$r=0.5, \quad a=1, \quad b=r-a, \quad c=r~\text{and,}~T=1. $$} 
\end{example}
\begin{figure}[H]
    \centering
        \includegraphics[width=\textwidth]{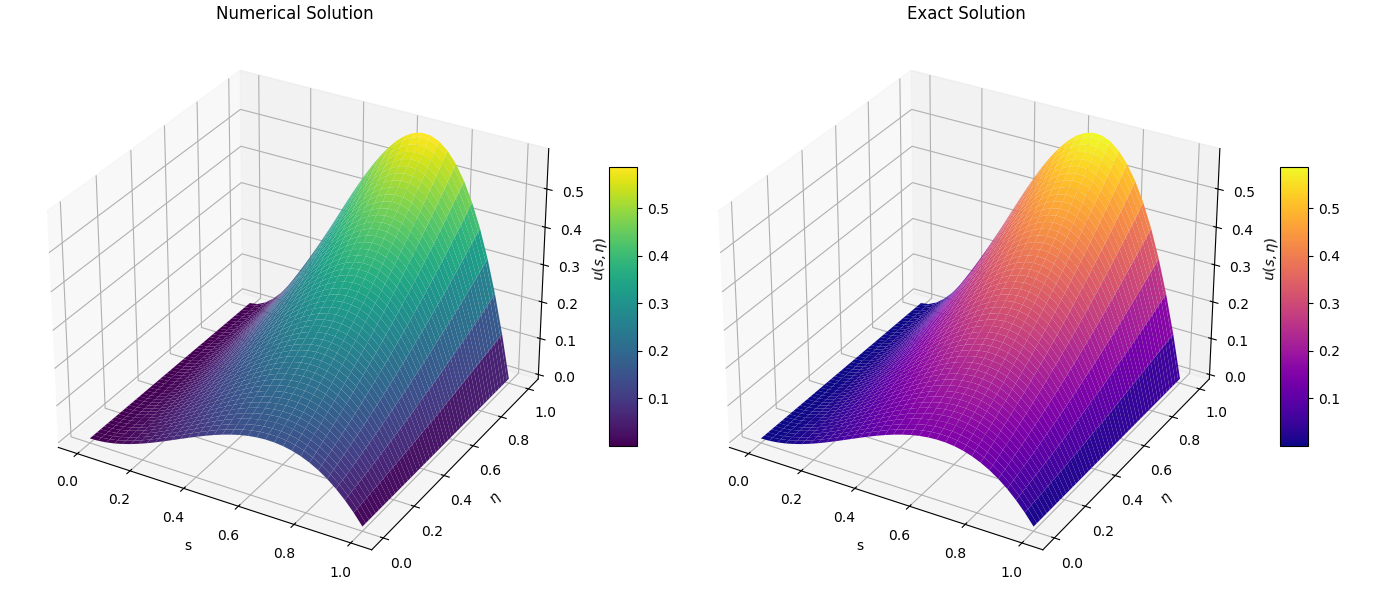}
    \caption{ Surface plot of numerical and exact solution at $\alpha =0.7, M=20$ of Example 4.1}
    \label{Surfacelpot1}
\end{figure} 

\begin{figure}[H]
    \centering
    \begin{subfigure}[b]{0.45\textwidth}
        \centering
        \includegraphics[width=\textwidth]{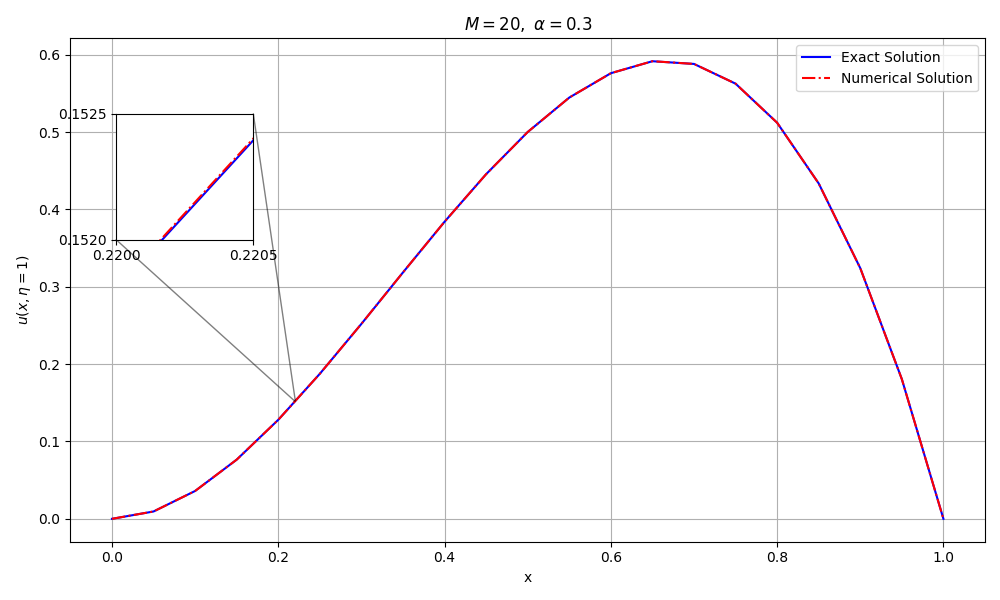}
        \caption{}
    \end{subfigure}
    \hspace{1cm}
    \begin{subfigure}[b]{0.45\textwidth}
        \centering
        \includegraphics[width=\textwidth]{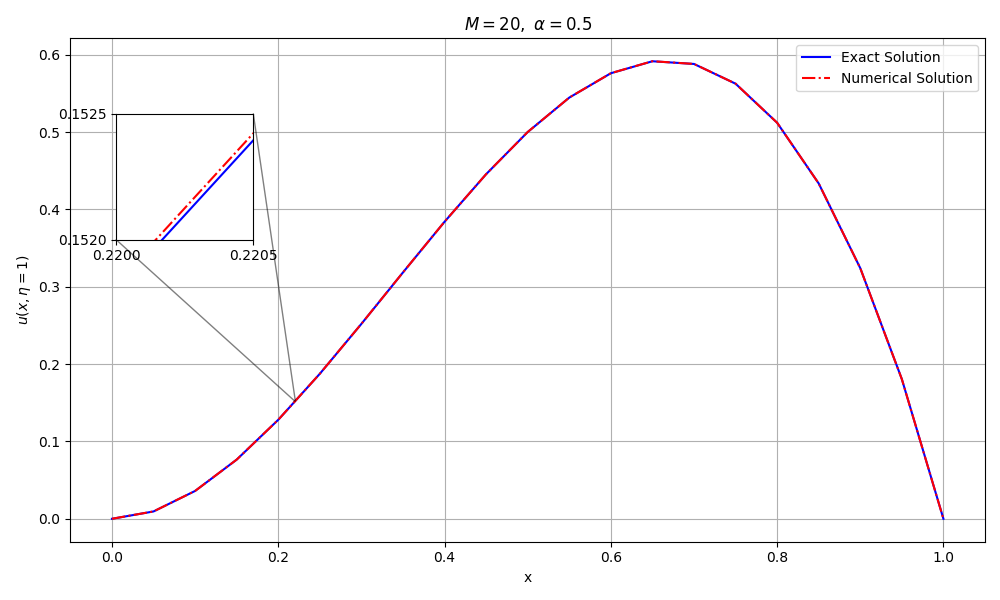}
         \caption{}
    \end{subfigure}
    \begin{subfigure}[b]{0.45\textwidth}
        \centering
        \includegraphics[width=\textwidth]{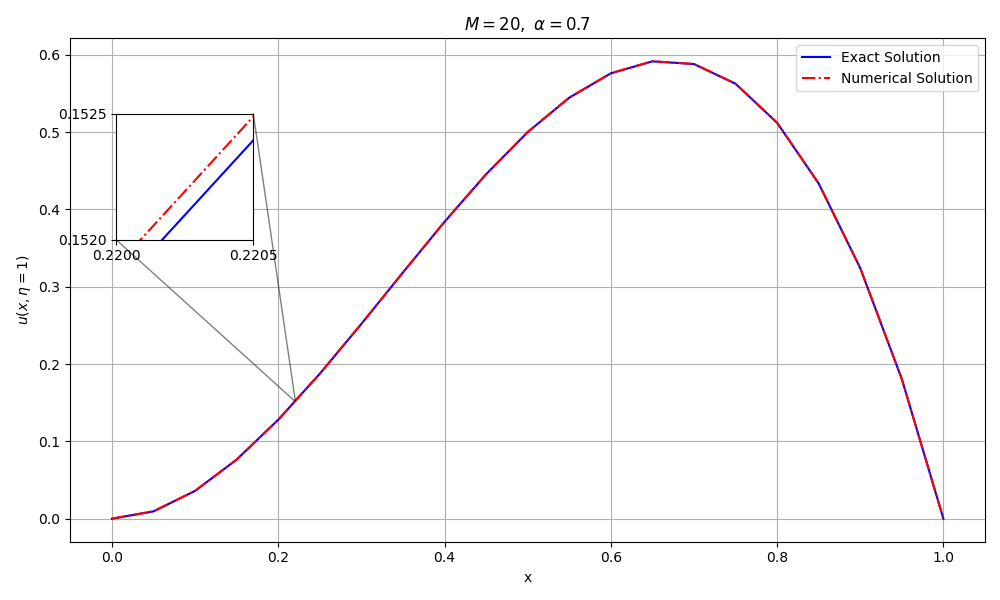}
         \caption{}
    \end{subfigure}
    \hspace{1.1cm}
    \begin{subfigure}[b]{0.45\textwidth}
        \centering
        \includegraphics[width=\textwidth]{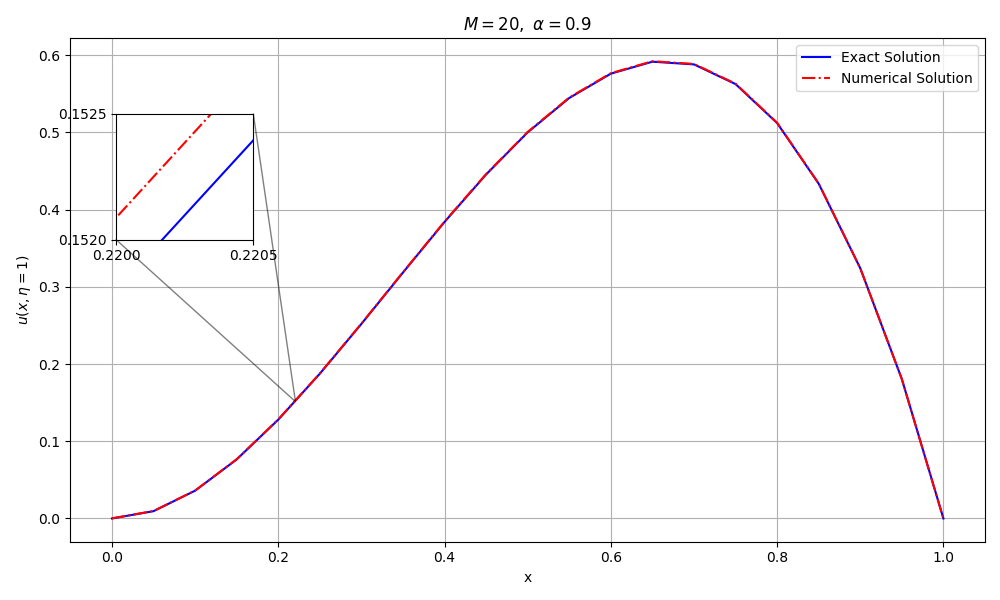}
         \caption{}
    \end{subfigure}
    \caption{(a)-(d) Comparison of Numerical vs Exact solution for different values of $\alpha$ at $M=20$ of Example 4.1}
    \label{payoff1}
\end{figure}

\noindent The evaluated numerical error  values and spatial order of convergence  ($OC_s$)   for $M=2^m\cdot 10, \quad N=10^{m+1}, m=0,1,2,... $ are displayed in  Table \ref{tab4}. The tabulated results indicate that for all values of $\alpha$, the errors in both norms decreases as $M$ increases, indicating that the numerical method converges with refinement. 
\begin{table}[H]
\centering
\begin{tabular}{c|c c c c|c c c c}
\toprule
\multirow{2}{*}{M} & \multicolumn{4}{c|}{$\alpha = 0.3$} & \multicolumn{4}{c}{$\alpha = 0.5$} \\ 
& $L_2$ Error & $OC_s$ & $L_{\infty}$ Error & $OC_s$ & $L_2$ Error & $OC_s$ & $L_{\infty}$ Error & $OC_s$ \\ 
\midrule
$2^0 \cdot 10$ & $3.104e{-2}$ &  & $1.737e{-2}$ & & $9.252e{-2}$ &  & $5.034e{-2}$ &  \\
$2^1\cdot 10$ & $9.560e{-4}$ & 5.02 & $3.975e{-4}$ & 5.45 & $4.268e{-3}$ & 4.44 & $1.719e{-3}$ & 4.87 \\
$2^2\cdot 10$ & $2.116e{-5}$ & 5.46 & $6.643e{-6}$ & 5.90 & $1.728e{-4}$ & 4.63 & $5.242e{-5}$ & 5.03 \\
\bottomrule
\end{tabular}

\vspace{0.5cm}

\begin{tabular}{c|c c c c|c c c c}
\toprule
\multirow{2}{*}{M} & \multicolumn{4}{c|}{$\alpha = 0.7$} & \multicolumn{4}{c}{$\alpha = 0.9$} \\ 
& $L_2$ Error & $OC_s$ & $L_{\infty}$ Error & $OC_s$ & $L_2$ Error & Order & $L_{\infty}$ Error & $OC_s$ \\ 
\midrule
$2^0\cdot 10$ & $2.328e{-1}$ &  & $1.222e{-1}$ &  & $7.109e{-1}$ &  & $2.656e{-1}$ &  \\
$2^1\cdot 10$ & $1.678e{-2}$ & 3.79 & $6.466e{-3}$ & 4.24 & $5.897e{-2}$ & 3.59 & $2.182e{-2}$ & 3.60 \\
$2^2\cdot 10$ & $1.188e{-3}$ & 3.82 & $3.287e{-4}$ & 4.07 & $6.542e{-3}$ & 3.17 & $1.742e{-3}$ & 3.64 \\
\bottomrule
\end{tabular}
\caption{Error and $OC_S$ for different $\alpha$ values of Example 4.2}
\label{tab4}
\end{table}
\noindent Table \ref{tab5} displays the error values and the order of convergence in time for $M=80,\quad N=2^m\cdot 10,~m=0,1,2,...$, and the data show that the order of convergence in time is $2-\alpha$.
\begin{table}[H]
\centering
\begin{tabular}{c|c c c c|c c c c}
\toprule
\multirow{2}{*}{N} & \multicolumn{4}{c|}{$\alpha = 0.3$} & \multicolumn{4}{c}{$\alpha = 0.5$} \\ 
& $L_2$ Error & $OC_\eta$ & $L_{\infty}$ Error & $OC_\eta$ & $L_2$ Error & $OC_\eta$ & $L_{\infty}$ Error & $OC_\eta$ \\ 
\midrule
$2^0 \cdot 10$ & $7.894e{-2}$ &  & $1.709e{-2}$ & & $2.415e{-1}$ &  & $5.015e{-2}$ &  \\
$2^1\cdot 10$ & $2.470e{-2}$ & 1.68 & $5.442e{-3}$ & 1.65 & $9.045e{-2}$ & 1.42 & $1.862e{-2}$ & 1.43 \\
$2^2\cdot 10$ & $6.984e{-3}$ & 1.82 & $1.607e {-3}$ & 1.75 & $3.120e{-2}$ & 1.54 & $6.573e{-3}$ & 1.50 \\
\bottomrule
\end{tabular}
\vspace{0.5cm}

\begin{tabular}{c|c c c c|c c c c}
\toprule
\multirow{2}{*}{N} & \multicolumn{4}{c|}{$\alpha = 0.7$} & \multicolumn{4}{c}{$\alpha = 0.9$} \\ 
& $L_2$ Error & $OC_\eta$ & $L_{\infty}$ Error & $OC_\eta$ & $L_2$ Error & Order & $L_{\infty}$ Error & $OC_\eta$ \\ 
\midrule
$2^0\cdot 10$ & $6.142e{-1}$ &  & $1.221e{-1}$ &  & $1.38e{0}$ &  & $2.656e{-1}$ &  \\
$2^1\cdot 10$ & $2.567e{-1}$ & 1.26 & $5.099e{-2}$ & 1.27 & $6.601e{-1}$ & 1.06 & $1.261e{-2}$ & 1.07 \\
$2^2\cdot 10$ & $1.068e{-1}$ & 1.27 & $2.113e{-2}$ & 1.27 & $3.109e{-1}$ & 1.09 & $5.933e{-2}$ & 1.09 \\
\bottomrule
\end{tabular}

\caption{Error and $OC_\eta$ for different $\alpha$ values of Example 4.2}
\label{tab5}
\end{table}

\begin{figure}[H]
    \centering
    \begin{subfigure}[b]{0.45\textwidth}
        \centering
        \includegraphics[width=\textwidth]{e1errorl2.png}
        \caption{}
    \end{subfigure}
    \hspace{1cm}
    \begin{subfigure}[b]{0.45\textwidth}
        \centering
        \includegraphics[width=\textwidth]{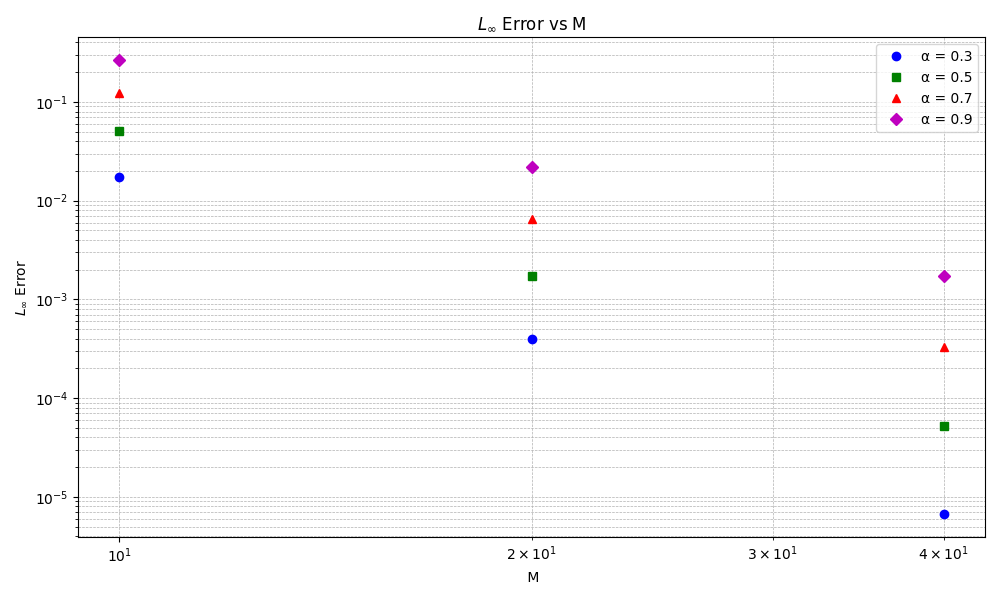}
         \caption{}
    \end{subfigure}
    \caption{(a)-(b) Error for different values of $\alpha$ at $M=20$ of Example 4.2}
    \label{error1}
\end{figure}
\noindent Figure \ref{fig3}: (a)-(d) presents the comparison of numerical and exact solution for various values of $\alpha$ for $M=20$.

\begin{figure}[H]
    \centering
    \begin{subfigure}[b]{0.45\textwidth}
        \centering
        \includegraphics[width=\textwidth]{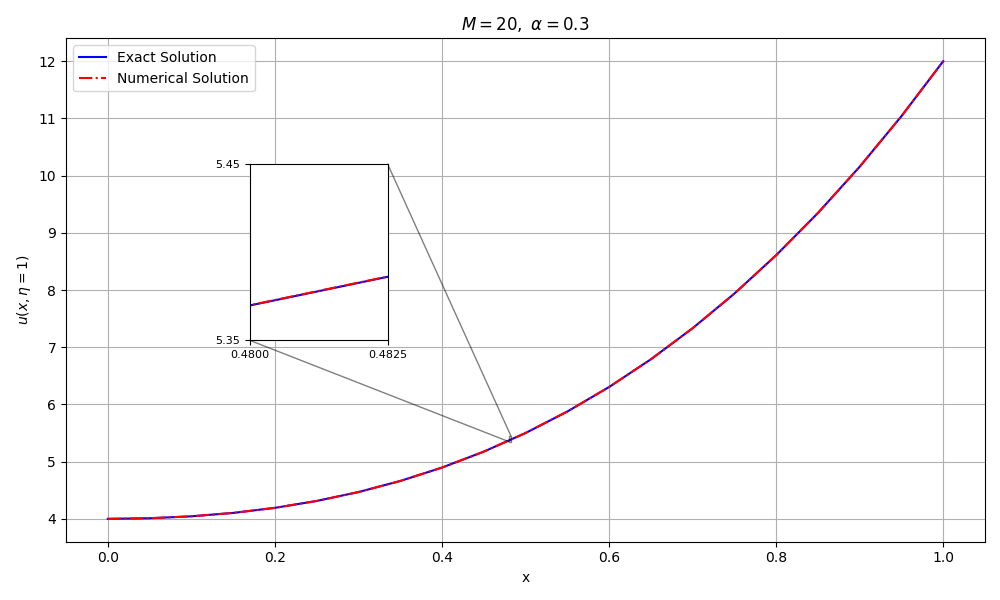}
        \caption{}
    \end{subfigure}
    \hspace{1cm}
    \begin{subfigure}[b]{0.45\textwidth}
        \centering
        \includegraphics[width=\textwidth]{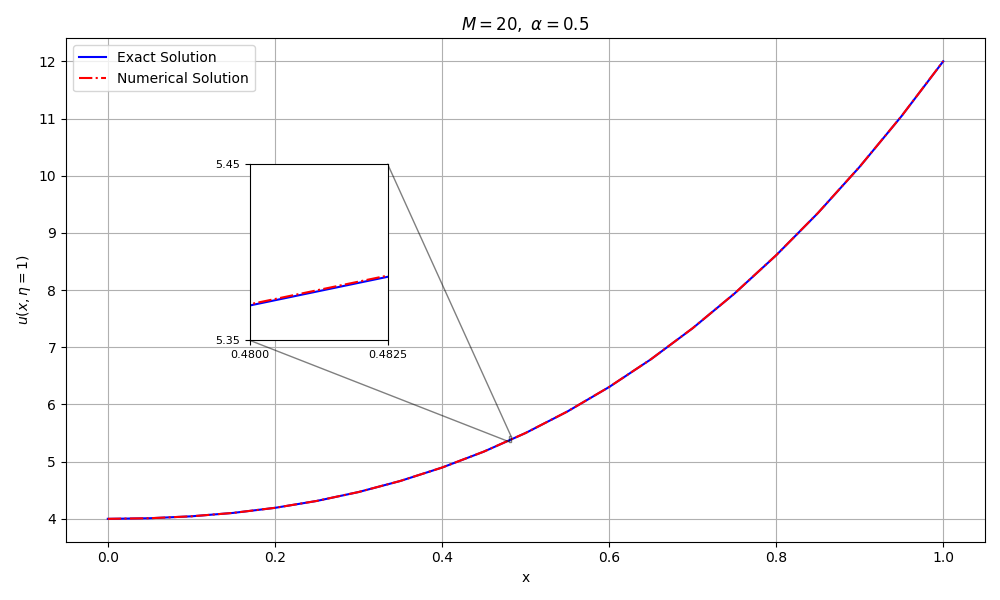}
         \caption{}
    \end{subfigure}
    \begin{subfigure}[b]{0.45\textwidth}
        \centering
        \includegraphics[width=\textwidth]{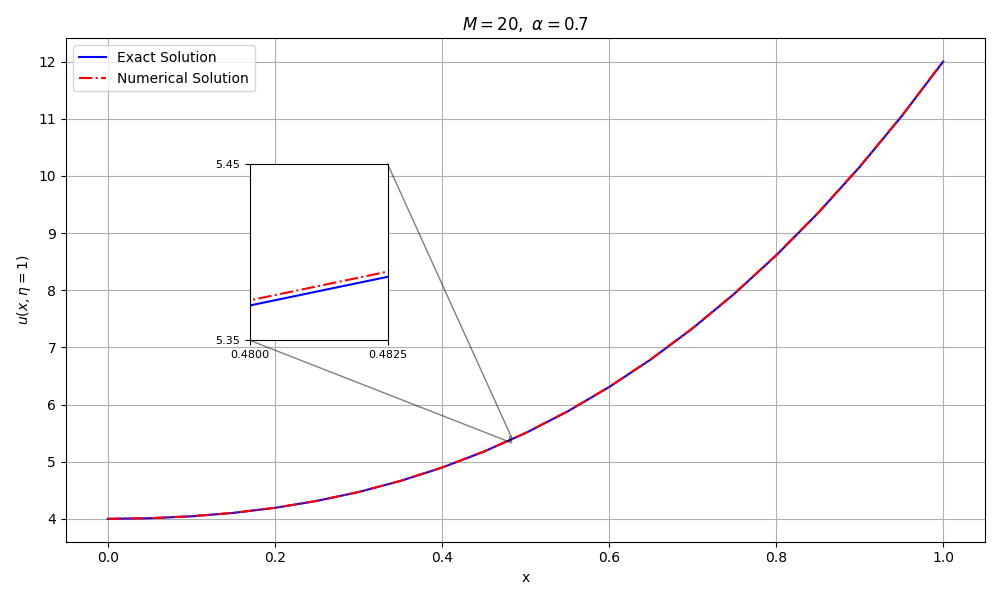}
         \caption{}
    \end{subfigure}
    \hspace{1.1cm}
    \begin{subfigure}[b]{0.45\textwidth}
        \centering
        \includegraphics[width=\textwidth]{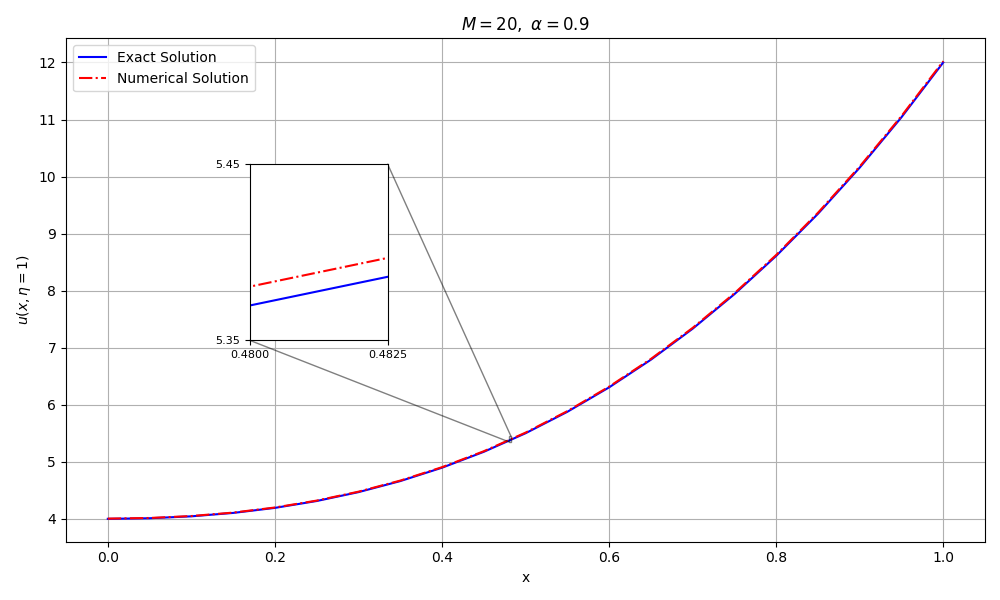}
         \caption{}
    \end{subfigure}
    \caption{(a)-(d) Comparison of Numerical vs Exact solution for different values of $\alpha$ at $M=20$ of Example 4.2}
    \label{fig3}
\end{figure}
\noindent Figure \ref{fig4} presents the surface plot of the solutions for $\alpha =0.7$ and $M=20$. 
\begin{figure}[H]
    \centering
        \includegraphics[width=\textwidth]{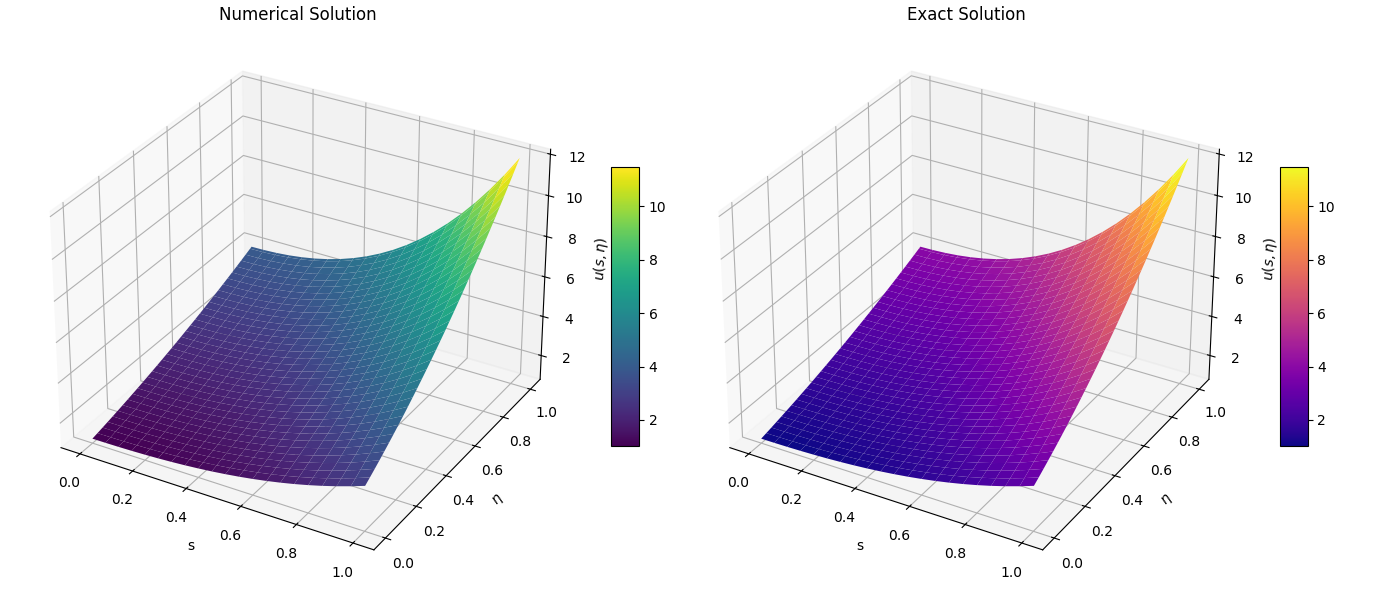}
    \caption{ Surface plot of numerical and exact solution at $\alpha =0.7, M=20$ of Example 4.2}
    \label{fig4}
\end{figure}

We observe that from the Table \ref{tab4}, for $\alpha$ tending to zero, the order of convergence is relatively high, shows the memory effect of the model, when $\alpha$ goes to 1 the order of convergence is near to 4, compared with the existing method in \cite{Zhang}.
\section{Conclusion}
This paper constructs an implicit numerical scheme to solve TFBSE. The construction of the numerical algorithm is achieved by discretizing the time-fractional derivative by the L1 finite difference method and the spatial derivative by the modified cubic B-spline-based differential quadrature method on uniform meshes. The stability of the proposed method has been studied by estimating an upper bound for the maximum norm of the inverse operator. Thanks to the Neumann series theorem it provides a uniform bound for the inverse operator under reasonable conditions
on the mesh parameters. The performance of the demonstrated method is tested on two sets of examples having exact solutions.   The results show that the numerical method exhibits a fourth-order convergence in the space direction and the order $2-\alpha$ in time. Moreover, we observe an enhancement in order of spatial convergence whenever $\alpha$ tends to $0$. The computational results of these experiments are compared with some existing popular techniques which suggest that the proposed method outperforms conventional methods from the literature in terms of solution accuracy. 
\section*{Acknowledgment}
The authors express their gratitude to the anonymous referees for their insightful input, valuable comments, and suggestions. The authors also thank the University Grants Commission (UGC), India, for the financial aid provided for this research.
\bibliographystyle{elsarticle-num}


\end{document}